\newtheorem{thm}{Theorem}[section]
\newtheorem{cor}[thm]{Corollary}
\newtheorem*{theorem*}{Theorem}
\newtheorem*{subspacetheorem*}{Subspace Theorem}
\numberwithin{equation}{section}
\def\pitem{\advance\leftskip3mm\advance\linewidth-3mm}
\def\mitem{\advance\leftskip-3mm\advance\linewidth3mm}
\gdef\SetFigFont#1#2#3#4#5{
  \reset@font\fontsize{#1}{#2pt}
  \fontfamily{#3}\fontseries{#4}\fontshape{#5}
  \selectfont}
\renewcommand{\subjclass}[1]{\thanks{\emph{2010 Mathematics Subject Classification:}~#1}}
\renewcommand{\keywords}[1]{\thanks{\emph{Keywords and Phrases:}~#1}}
\renewcommand{\date}{\thanks{\today}}
\newcommand{\av}{{\bf a}}
\newcommand{\ev}{{\bf e}}
\newcommand{\uv}{{\bf u}}
\newcommand{\vv}{{\bf v}}
\newcommand{\xv}{{\bf x}}
\newcommand{\yv}{{\bf y}}
\newcommand{\zv}{{\bf z}}
\newcommand{\Av}{{\bf A}}
\newcommand{\Bv}{{\bf B}}
\newcommand{\Xv}{{\bf X}}
\newcommand{\nullv}{{\bf 0}}
\newcommand{\CC}{\mathcal{C}}
\newcommand{\EE}{\mathcal{E}}
\newcommand{\II}{\mathcal{I}}
\renewcommand{\SS}{\mathcal{S}}
\newcommand{\Cc}{\mathbb{C}}
\newcommand{\Rr}{\mathbb{R}}
\newcommand{\Qq}{\mathbb{Q}}
\newcommand{\Zz}{\mathbb{Z}}
\def\house#1{\setbox1=\hbox{$\,#1\,$}%
\dimen1=\ht1 \advance\dimen1 by 2pt \dimen2=\dp1 \advance\dimen2
by 2pt
\setbox1=\hbox{\vrule height\dimen1 depth\dimen2\box1\vrule}%
\setbox1=\vbox{\hrule\box1}%
\advance\dimen1 by .4pt \ht1=\dimen1 \advance\dimen2 by .4pt
\dp1=\dimen2 \box1\relax}
\newcommand{\kdots}{,\ldots ,}
\newcommand{\medfrac}[2]{\mbox{\large{$\textstyle{\frac{#1}{#2}}$}}}
\newcommand{\medbinom}[2]{\mbox{\large{$\textstyle{\binom{#1}{#2}}$}}}
\title[Mahler's work on the geometry of numbers]
{Mahler's work on the geometry of numbers}
\subjclass{11H06, 11H16, 11H60} 
\keywords{star bodies, critical lattices, compound convex bodies, successive 
minima, transference principles}
\date
\author[J.-H. Evertse]{Jan-Hendrik Evertse}
\address{J.-H. Evertse \newline
         \indent Universiteit Leiden, Mathematisch Instituut, \newline
         \indent Postbus 9512, 2300 RA Leiden, The Netherlands}
\email{evertse\char'100math.leidenuniv.nl}
\begin{document}

\maketitle

Mahler has written many papers on the geometry of numbers. 
Arguably, his most influential achievements in this area are his compactness
theorem for lattices, his work on star bodies
and their critical lattices, and his estimates
for the successive minima of reciprocal convex bodies and compound
convex bodies. We give a, by far not complete, overview of Mahler's work
on these topics and their impact.

\section{Compactness theorem, star bodies and 
their critical lattices}\label{section1}

Many problems in the geometry of numbers are about whether a particular
$n$-dimensional body contains a non-zero point from a given lattice,
and quite often one can show that this is true as long as the determinant
of the lattice is below a critical value depending on the given body.
Mahler intensively studied such problems for so-called \emph{star bodies}.
Before mentioning some of his results, 
we start with recalling some definitions.
We follow \cite{Mahler1946d}.

Let $n\geq 2$ be an integer that we fix henceforth.
A \emph {distance function} on $\Rr^n$
is a function $F:\, \Rr^n\to\Rr$ such that:
\begin{itemize}
\item[(i)]
$F(\xv )\geq 0$ for all $\xv\in\Rr^n$ and $F(\xv)>0$ for at least one $\xv$;
\item[(ii)] $F(t\xv )=|t|\cdot F(\xv )$ for $\xv\in\Rr^n$ and $t\in\Rr$;
\item[(iii)] $F$ is continuous.
\end{itemize}
A \emph{(symmetric) star body} in $\Rr^n$ is a set of the shape 
\[
\SS =\{ \xv\in\Rr^n:\, F(\xv )\leq 1\}, 
\]
where $F$ is a distance function. 
We call $\SS$ the star body with distance function $F$.
The boundary of $\SS$ is $\{ \xv\in\Rr^n:\, F(\xv )=1\}$,
and the interior of $\SS$ is $\{ \xv\in\Rr^n:\, F(\xv )<1\}$. 
The set $\SS$ is bounded, if and only if $F(\xv)>0$ whenever $\xv\not= 0$.
The star bodies contain
as a subclass the \emph{symmetric convex bodies}, which correspond
to the distance functions $F$ satisfying in addition to (i),(ii),(iii)
the triangle inequality $F(\xv +\yv )\leq F(\xv )+F(\yv)$ for $\xv,\yv\in\Rr$.

Let $\Lambda =\{ \sum_{i=1}^n z_i\av_i:\, z_1\kdots z_n\in\Zz\}$ 
be a lattice in $\Rr^n$
with basis $\{\av_1\kdots\av_n\}$. We define its determinant by
$d(\Lambda ):=|\det (\av_1\kdots\av_n )|$.
Let $\SS$ be a star body.
We call $\Lambda$ $\SS$-\emph{admissible} if $\nullv$ is the only point of $\Lambda$ in the interior of $\SS$.
The star body $\SS$ is called of \emph{finite type} if it has admissible lattices,
and of infinite type otherwise. Bounded star bodies are necessarily of
finite type, but conversely, star bodies of finite type do not have to be
bounded. For instance, let 
$\SS :=\{ \xv=(x_1\kdots x_n)\in\Rr^n:\, |x_1\cdots x_n|\leq 1\}$.
Take a totally real number field $K$ of degree $n$, denote by $O_K$
its ring of integers, and let $\alpha\mapsto\alpha^{(i)}$ $(i=1\kdots n)$ be the
embeddings of $K$ in $\Rr$. Then $\{ (\alpha^{(1)}\kdots\alpha^{(n)}):\, \alpha\in O_K\}$ is an $\SS$-admissible lattice. 

Assume henceforth that $\SS$ is 
a star body of finite type.
Then we can define its determinant,
\[
\Delta (\SS ):=\inf\{ d(\Lambda ):\,\Lambda\ \mbox{admissible lattice for }\SS\}.
\]
Thus, if $\Lambda$ is any lattice in $\Rr^n$ with $d(\Lambda )<\Delta (\SS )$,
then $\SS$ contains a non-zero point from $\Lambda$.
The quantity $\Delta (\SS )$ cannot be too small. From the Minkowski-Hlawka
theorem (proved by Hlawka 
\cite{Hlawka1944} and earlier stated without proof by Minkowski)
it follows that $\Delta (\SS )>(2\zeta (n))^{-1}V(\SS)$, where 
$\zeta (n)=\sum_{k=1}^{\infty} k^{-n}$ and $V(\SS )$ is the volume
($n$-dimensional Lebes\-gue measure) of $\SS$.

We call $\Lambda$ a \emph{critical lattice} for $\SS$ if $\Lambda$ 
is $\SS$-admissible
and $d(\Lambda )=\Delta (\SS )$.
In a series of papers 
\cite{Mahler1942,Mahler1943,Mahler1946a, Mahler1946b, Mahler1946c} Mahler studied star bodies in $\Rr^2$,
proved that they have critical lattices, and computed their
determinant in various instances.
Later, Mahler picked up the study of star bodies of arbitrary dimension
\cite{Mahler1946d}.
We recall Theorem 8 from this paper, which is Mahler's central 
result on star bodies.

\begin{thm}\label{thm1.1}
Let $\SS$ be a star body in $\Rr^n$ of finite type.
Then $\SS$ has at least one critical lattice.
\end{thm}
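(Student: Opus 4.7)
The plan is to invoke Mahler's compactness theorem (the selection theorem for lattices) to extract a convergent subsequence from a minimizing sequence of $\SS$-admissible lattices, and then to verify that the limit lattice remains admissible and attains the infimum.

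Since $\SS$ is of finite type, the set of $\SS$-admissible lattices is nonempty, and by definition of $\Delta(\SS)$ we may choose a sequence $\{\Lambda_k\}_{k\geq 1}$ of $\SS$-admissible lattices with $d(\Lambda_k)\to\Delta(\SS)$; in particular the determinants are uniformly bounded. To apply the compactness theorem I also need a uniform positive lower bound on the length of nonzero vectors of $\Lambda_k$. This comes from the star body itself: by property (ii) with $t=0$ one has $F(\nullv)=0$, and then by continuity (property (iii)) there exists $r>0$ such that $F(\xv)<1$ whenever $\|\xv\|<r$. Hence the open ball $B_r(\nullv)$ lies in the interior of $\SS$, and because each $\Lambda_k$ is $\SS$-admissible, every nonzero vector of $\Lambda_k$ has length at least $r$.

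With uniform upper bounds on $d(\Lambda_k)$ and uniform lower bounds on the shortest nonzero vectors of $\Lambda_k$, Mahler's compactness theorem yields a subsequence, which I again denote $\{\Lambda_k\}$, converging to a lattice $\Lambda_0$ in $\Rr^n$. Continuity of the determinant along such convergent sequences gives $d(\Lambda_0)=\lim_k d(\Lambda_k)=\Delta(\SS)$. To finish, I must show $\Lambda_0$ is $\SS$-admissible. Suppose, for contradiction, that $\Lambda_0$ contains a nonzero vector $\vv$ with $F(\vv)<1$. Convergence of lattices in Mahler's sense lets me pick $\vv_k\in\Lambda_k$ with $\vv_k\to\vv$; continuity of $F$ then gives $F(\vv_k)<1$ for all sufficiently large $k$, and $\vv_k\neq\nullv$ for large $k$ since $\vv\neq\nullv$. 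This contradicts the $\SS$-admissibility of $\Lambda_k$. Therefore $\Lambda_0$ is $\SS$-admissible with $d(\Lambda_0)=\Delta(\SS)$, i.e., a critical lattice.

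The heart of the argument, and the main obstacle if one had to build the proof from scratch, is the compactness theorem for lattices itself; once that is available, the proof reduces to verifying the two hypotheses (bounded determinants, bounded away from $\nullv$) and a routine limit passage. A subtle point worth noting is that the lower bound $r$ uses only continuity of $F$ at the origin together with $F(\nullv)=0$, and \emph{not} boundedness of $\SS$; this is essential, since an $\SS$ of finite type need not be bounded, as the Minkowski product-form example in the text illustrates.
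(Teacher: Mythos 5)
Your proof is correct and follows essentially the same route as the paper's: a minimizing sequence of admissible lattices, a uniform lower bound on shortest nonzero vectors coming from the fact that a ball around the origin lies in the interior of $\SS$, Mahler's compactness theorem to extract a convergent subsequence, continuity of the determinant, and a contradiction argument to show the limit lattice is still admissible. The only cosmetic difference is that you derive the radius $r$ explicitly from $F(\nullv)=0$ (homogeneity with $t=0$) and continuity of $F$, while the paper simply asserts the existence of $\rho$ with the closed ball of radius $\rho$ contained in $\SS$; these amount to the same thing.
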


The main tool is a compactness result for lattices, also due to Mahler.
We say that a sequence of lattices $\{\Lambda_m\}_{m=1}^{\infty}$ in $\Rr^n$ converges
if we can choose a basis $\av_{m,1}\kdots\av_{m,n}$ of $\Lambda_m$ 
for $m=1,2,\ldots$
such that $\av_j:=\lim_{m\to\infty}\av_{m,j}$ exists
for $j=1\kdots n$ and $\av_1\kdots\av_n$ are linearly
independent. We call the lattice $\Lambda$ with basis
$\av_1\kdots\av_n$ the limit of the sequence  $\{\Lambda_m\}_{m=1}^{\infty}$;
it can be shown that this limit, if it exists, is unique.
Denote by $\|\xv\|$ the Euclidean norm of $\xv\in\Rr^n$.
The following result, which became known as \emph{Mahler's compactness theorem}
or \emph{Mahler's selection theorem} and turned out to be
a valuable tool at various places other than the geometry of numbers,
is Theorem 2 from \cite{Mahler1946d}. 

\begin{thm}\label{thm1.2}
Let $\rho>0$, $C>0$. Then any infinite collection of lattices $\Lambda$
in $\Rr^n$ 
such that $\min\{ \|\xv\|:\, \xv\in\Lambda\setminus\{ 0\}\}\geq\rho$
and $d(\Lambda )\leq C$ has an infinite convergent subsequence.
\end{thm}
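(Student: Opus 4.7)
The plan is to equip each $\Lambda_m$ with an ordered basis whose vectors have uniformly bounded norms, then extract a convergent subsequence by coordinate-wise compactness, and finally verify that the limiting vectors form a basis of a lattice which is the limit of the chosen subsequence in the sense defined just before the theorem.

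For the first step, let $\lambda_{m,1}\leq\cdots\leq\lambda_{m,n}$ denote the successive minima of $\Lambda_m$ with respect to the Euclidean unit ball, whose volume I denote $V_n$. The hypothesis forces $\lambda_{m,1}\geq\rho$, and Minkowski's second theorem gives
\[
\lambda_{m,1}\cdots\lambda_{m,n}\cdot V_n\leq 2^n d(\Lambda_m)\leq 2^nC,
\]
so each $\lambda_{m,j}$ lies in $[\rho,M]$ with $M:=2^nC/(V_n\rho^{n-1})$. A classical reduction-theoretic argument (Minkowski reduction) then produces a $\Zz$-basis $\av_{m,1},\ldots,\av_{m,n}$ of $\Lambda_m$ satisfying $\|\av_{m,j}\|\leq c_n\lambda_{m,j}\leq c_nM$, where $c_n$ depends only on $n$. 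The $n$-tuples $(\av_{m,1},\ldots,\av_{m,n})$ therefore lie in a fixed compact subset of $(\Rr^n)^n$, and the Bolzano-Weierstrass theorem yields an infinite subsequence $(m_k)$ along which $\av_{m_k,j}\to\av_j$ for each $j=1,\ldots,n$.

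To conclude, it suffices to check that the limits $\av_1,\ldots,\av_n$ are linearly independent. Since the open ball of radius $\rho$ contains no nonzero point of any $\Lambda_m$, Minkowski's first theorem gives $V_n\rho^n\leq 2^n d(\Lambda_m)$, so $d(\Lambda_m)\geq V_n\rho^n/2^n$ uniformly in $m$. Using $d(\Lambda_m)=|\det(\av_{m,1},\ldots,\av_{m,n})|$ and continuity of the determinant,
\[
|\det(\av_1,\ldots,\av_n)|\geq V_n\rho^n/2^n>0,
\]
which yields linear independence. Then $\Lambda:=\Zz\av_1+\cdots+\Zz\av_n$ is a lattice, and by the very definition of convergence stated before the theorem, $\Lambda$ is the limit of $\{\Lambda_{m_k}\}$.

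The main obstacle is the passage from uniformly bounded successive minima to a uniformly bounded \emph{basis}. Vectors realizing the successive minima need not form a $\Zz$-basis in dimensions $\geq 5$ (a phenomenon Mahler himself studied), so one cannot simply take shortest independent vectors; some genuine piece of reduction theory is needed to produce an honest basis whose vectors have lengths commensurate with the successive minima. Once this ingredient is in place, the rest of the proof is routine compactness and continuity.
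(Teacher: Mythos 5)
The paper is a survey and states Theorem \ref{thm1.2} without proof, simply citing Mahler's 1946 paper; there is therefore no internal proof to compare against. Your argument is nevertheless correct and is essentially the standard modern proof: bound all successive minima above by Minkowski's second theorem and below by the hypothesis $\lambda_1\geq\rho$, pass to a basis with comparably bounded norms, apply Bolzano--Weierstrass, and use the uniform lower bound on $d(\Lambda_m)$ coming from Minkowski's first theorem to keep the limiting vectors linearly independent. Each of these steps is sound.

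One small point of precision: the key lemma you invoke --- that there is a $\Zz$-basis $\av_{m,1},\ldots,\av_{m,n}$ with $\|\av_{m,j}\|\leq c_n\lambda_{m,j}$ --- is usually proved directly from a triangularization argument rather than as a corollary of Minkowski reduction in the strict technical sense; see, e.g., Cassels, \emph{An Introduction to the Geometry of Numbers}, Ch.\ V, Lemma 8, which yields $\|\av_{m,j}\|\leq\max(1,j/2)\lambda_{m,j}$. (Minkowski-reduced bases do also satisfy such bounds, but establishing that fact requires more work and larger constants.) You are right to flag this as the only nontrivial ingredient, and right that in dimensions $\geq 5$ the shortest independent vectors need not generate the whole lattice; citing the Cassels lemma (or the observation that the index of the sublattice generated by minimum-attaining vectors is bounded by a constant depending only on $n$) would close that gap cleanly.
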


We recall the quick deduction of Theorem \ref{thm1.1}.

\begin{proof}[Proof of Theorem \ref{thm1.1}]
By the definition of $\Delta (\SS )$, there is an infinite
sequence $\{ \Lambda_m\}_{m=1}^{\infty}$ of $\SS$-admissible lattices 
such that $\Delta (\SS )\leq d(\Lambda_m)\leq  \Delta (\SS )+1/m$ 
for $m=1,2,\ldots$.
Since $\nullv$ is an interior point of $\SS$,there is $\rho >0$
such that $\{ \xv\in\Rr^n:\, \|\xv\|\leq\rho\}\subseteq\SS$. 
hence $\|\xv\|\geq\rho$ for every non-zero $\xv\in\Lambda_m$
and every $m\geq 1$. Further, the sequence $\{d(\Lambda_m)\}$  
is clearly bounded. So by Theorem \ref{thm1.2}, $\{\Lambda_m\}$ has
a convergent subsequence. After reindexing, we may write this
sequence as $\{\Lambda_m\}_{m=1}^{\infty}$ and denote its limit by
$\Lambda$. We show that $\Lambda$ is a critical lattice for $\SS$.

Choose bases $\av_{m,1}\kdots\av_{m,n}$ of $\Lambda_m$
for $m=1,2,\ldots$ and $\av_1\kdots\av_n$ of $\Lambda$ such that 
$\av_{m,j}\to\av_j$ for $j=1\kdots n$. 
Clearly $d(\Lambda )=\lim_{m\to\infty} d(\Lambda_m)=\Delta (\SS )$.
To prove that $\Lambda$ is $\SS$-admissible, 
take a non-zero $\xv_0\in\Lambda$ and assume it is in the interior of $\SS$.
Then there is $\epsilon >0$ such that all $\xv\in\Rr^n$ with
$\|\xv -\xv_0\|<\epsilon$ are in the interior of $\SS$.
Write $\xv_0=\sum_{i=1}^n z_i\av_i$ with $z_i\in\Zz$,
and then $\xv_m=\sum_{i=1}^n z_i\av_{m,i}$ for $m\geq 1$, so that
$\xv_m\in\Lambda_m\setminus\{ \nullv\}$. For $m$ sufficiently large,
$\|\xv_m-\xv_0\|<\epsilon$, hence $\xv_m$ is in the interior
of $\SS$, which is however impossible since $\Lambda_m$
is $\SS$-admissible.
This completes the proof.
\end{proof}

In \cite{Mahler1946d}, Mahler made a further study of the critical lattices
of $n$-dimensional star bodies.
Among other things he proved \cite[Theorem 11]{Mahler1946d}
that if $\SS$ is any bounded $n$-dimensional star body
and $\Lambda$ a critical lattice for $\SS$, then there are $n$ linearly
independent points of $\Lambda$ lying on the boundary of $\SS$.
If $P_1\kdots P_n$ are such points, then the $2n$ points
$\pm P_1\kdots \pm P_n$ lie on the boundary of $\SS$.
A simple consequence of this is, that any lattice of determinant equal to
$\Delta (\SS )$ has a non-zero point either in the interior or on the 
boundary of $\SS$.
Mahler showed further \cite[Corollary on p. 165]{Mahler1946d} that for any integer $m\geq n$ there exist
an $n$-dimensional star body $\SS$ and a critical lattice $\Lambda$ of $\SS$
having precisely $2m$ points on the boundary of $\SS$.

In an other series of papers on $n$-dimensional star bodies \cite{Mahler1946e}
Mahler introduced the notions of reducible and irreducible star bodies.
A star body $\SS$ is called \emph{reducible} if there is a star body $\SS'$ which is
strictly contained in $\SS$ and for which $\Delta (\SS ')=\Delta (\SS )$,
and otherwise irreducible. An unbounded star body $\SS$ of finite type
is called \emph{boundedly reducible} if there is a bounded star body $\SS'$
contained in $\SS$ such that $\Delta (\SS ')=\Delta (\SS )$.
Mahler gave criteria for star bodies being (boundedly) reducible
and deduced some Diophantine approximation results. To give a flavour
we mention one of these results \cite[Theorem P, p. 628]{Mahler1946e}:

\begin{thm}\label{thm1.3}
There is a positive constant $\gamma$ such that if $\beta_1,\beta_2$
are any real numbers and $Q$ is any number $>1$, then there are integers
$v_1,v_2,v_3$, not all $0$, such that
\begin{align*}
&|v_1v_2(\beta_1v_1+\beta_2v_2+v_3)|\leq\medfrac{1}{7},
\\
&|x_1|\leq Q,\, |x_2|\leq Q,\,
|\beta_1v_1+\beta_2v_2+v_3|\leq \gamma Q^{-2}.
\end{align*}
\end{thm}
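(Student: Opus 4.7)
The plan is to transform Theorem~\ref{thm1.3} into a statement about the existence of a non-zero point of a unit-determinant lattice in a bounded three-dimensional region, and then to apply Mahler's theory of critical lattices and of boundedly reducible star bodies from \cite{Mahler1946d, Mahler1946e}.

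First I would perform a determinant-one change of coordinates. The linear map $\psi:(v_1,v_2,v_3)\mapsto(y_1,y_2,y_3)$ defined by $y_1=Q^{-1}v_1$, $y_2=Q^{-1}v_2$, $y_3=Q^{2}(\beta_1v_1+\beta_2v_2+v_3)$ has determinant $1$, so $\Lambda:=\psi(\Zz^3)$ is a lattice with $d(\Lambda)=1$. A direct computation gives $|y_1y_2y_3|=|v_1v_2(\beta_1v_1+\beta_2v_2+v_3)|$, while the three individual size conditions translate into $|y_1|\leq 1$, $|y_2|\leq 1$, $|y_3|\leq \gamma$. Hence it suffices to exhibit, for some absolute $\gamma>0$, a non-zero point of $\Lambda$ in the bounded region
\[
\SS':=\bigl\{\yv\in\Rr^3:\, |y_1|\leq 1,\ |y_2|\leq 1,\ |y_3|\leq \gamma,\ |y_1y_2y_3|\leq \tfrac{1}{7}\bigr\}.
\]

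The key input is the classical fact, belonging to the same circle of ideas that Mahler developed further, that the unbounded star body $\SS:=\{\yv\in\Rr^3:|y_1y_2y_3|\leq \tfrac{1}{7}\}$ has critical determinant $\Delta(\SS)=1$, attained by the lattice built from the ring of integers of the totally real cubic field $\Qq(\zeta_7+\zeta_7^{-1})$. Invoking Mahler's theory of boundedly reducible star bodies from \cite{Mahler1946e}, one then shows that $\SS$ is boundedly reducible via the above $\SS'$: for an appropriate universal $\gamma>0$ one has $\Delta(\SS')=\Delta(\SS)=1$. Granted this, the conclusion follows in two lines: since $d(\Lambda)=1=\Delta(\SS')$, either $\Lambda$ is not $\SS'$-admissible---in which case a non-zero point of $\Lambda$ already lies in the interior of $\SS'$---or $\Lambda$ is a \emph{critical} lattice of the bounded star body $\SS'$, and Theorem~11 of \cite{Mahler1946d} then produces three linearly independent points of $\Lambda$ on the boundary of $\SS'$. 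Either way, pulling back by $\psi^{-1}$ yields the required triple $(v_1,v_2,v_3)\in\Zz^3\setminus\{\nullv\}$.

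The main obstacle is the bounded reducibility step. One must show that an absolute $\gamma$ can be chosen so that truncating $\SS$ by the slab $|y_3|\leq \gamma$ and the strips $|y_1|,|y_2|\leq 1$ does not lower the critical determinant. Intuitively, an $\SS'$-admissible lattice $\Lambda$ with $d(\Lambda)$ near $1$ that failed to be $\SS$-admissible would contain a non-zero point with $|y_3|$ much larger than $\gamma$ and $|y_1y_2|<1/(7\gamma)$; combining this point with suitable small lattice translates in the other two coordinate directions would then force a second non-zero lattice point into $\SS'$, contradicting admissibility. Making this quantitative---and in particular pinning down an explicit admissible value of $\gamma$---requires a careful analysis of how non-zero points of $\SS$-admissible lattices distribute themselves, and is precisely the genuine content of Theorem~P in \cite{Mahler1946e}.
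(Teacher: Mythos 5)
Your proposal is correct and follows essentially the same route as the paper: reduce the statement to a lattice question via a linear change of variables, use Davenport's evaluation of the critical determinant of $|x_1x_2x_3|\leq 1$ in $\Rr^3$, and then invoke Mahler's bounded reducibility theorem from \cite{Mahler1946e} to replace the unbounded star body by a bounded truncation with the same critical determinant. The only cosmetic difference is in the normalization — you work with a unit-determinant lattice and the asymmetric box $|y_1|,|y_2|\leq 1$, $|y_3|\leq\gamma$, whereas the paper keeps the cube truncation $\max_i|x_i|\leq r$ and scales the lattice to have determinant $7$, the two being related by a unimodular diagonal map that preserves both $\SS$ and critical determinants — and the bounded reducibility input you correctly identify as the hard step is Theorem~M (p.~527), not Theorem~P, of \cite{Mahler1946e}.
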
 

\begin{proof}[Idea of proof]
Let $\SS$ be the set of $\xv=(x_1,x_2,x_3)\in\Rr^3$ given
by $|x_1x_2x_3|\leq 1$.
By a result of Davenport
\cite{Davenport1938}, $\SS$ is a finite type star body and has determinant
$\Delta (\SS )=7$. Mahler \cite[Theorem M, p. 527]{Mahler1946e} 
proved that $\SS$ is in fact boundedly reducible,
which implies that there is $r>0$ such that the star body $\SS'$ given by
$|x_1x_2x_3|\leq 1$ and $\max_{1\leq i\leq 3} |x_i|\leq r$ also has determinant
$7$. Now let $\Lambda$ be the lattice consisting of the points
$\big(rQ^{-1}v_1,rQ^{-1}v_2,7r^{-2}Q^2(\beta_1v_1+\beta_2v_2+v_3)\big)$ with 
$v_1,v_2,v_3\in\Zz$. This lattice has determinant $7$ and so has a non-zero
point in $\SS'$. It follows that Theorem \ref{thm1.3} holds with
$\gamma =r^3/7$. 
\end{proof}

For further theory on star bodies, we refer to Mahler's papers quoted above
and the books of Cassels \cite{Cassels1971}
and Gruber and Lekkerkerker \cite{GruberLekkerkerker1987}. 

\section{Reciprocal convex bodies}\label{section2}

Studies of transference principles such as Khintchine's for systems of 
Diophantine inequalities (see \cite{Mahler1937, Mahler1939a}) led Mahler to consider \emph{reciprocal lattices}
and \emph{reciprocal convex bodies}
(also called polar lattices and polar convex bodies).
We recall some of his results. 
Here and below, for any real vectors $\xv$, $\yv$ of the same
dimension,  
we denote by $\xv\cdot\yv$ their standard inner product,
i.e., for
$\xv =(x_1\kdots x_m),\, \yv =(y_1\kdots y_m)\in\Rr^m$
we put $\xv\cdot\yv :=\sum_{i=1}^m x_iy_i$. Then the Euclidean norm
of $\xv\in\Rr^m$ is $\|\xv\|:=\sqrt{\xv\cdot\xv}$.

Now let $n$ be a fixed integer $\geq 2$.
Given a lattice $\Lambda$ in $\Rr^n$, we define the \emph{reciprocal lattice}
of $\Lambda$ by 
\[
\Lambda^*:=\{ \xv\in\Rr^n:\, \xv\cdot\yv\in\Zz\ \mbox{for all } \yv\in\Lambda\}.
\]
Then $\Lambda^*$ is again a lattice of $\Rr^n$, and $d(\Lambda^*)=d(\Lambda )^{-1}$.
Let $\CC$ be a symmetric convex body in $\Rr^n$, i.e.,
$\CC$ is convex, symmetric about $\nullv$ and compact. The set $\CC$ may be
described alternatively as $\{ \xv\in\Rr^n :\, F(\xv )\leq 1\}$,
where $F$ is a distance function as above, satisfying also the triangle
inequality. We define the reciprocal of $\CC$ by
\[
\CC^*=\{ \xv\in\Rr^n:\, \xv\cdot\yv\leq 1\ \mbox{for all } \yv\in\CC\}.
\]
Then $\CC^*$ is again a symmetric convex body.
Mahler \cite[p. 97, formula (6)]{Mahler1939b} proved the following result for the volumes
of $\CC$ and $\CC^*$.

\begin{thm}\label{thm2.1}
There are $c_1(n),\, c_2(n)>0$ depending only on $n$ with the
following property. If $\CC$ is any symmetric convex body in $\Rr^n$
and $\CC^*$ its reciprocal, then $c_1(n)\leq V(\CC )\cdot V(\CC^*)\leq c_2(n)$.
\end{thm}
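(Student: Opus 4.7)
My plan is to reduce the problem to Minkowski's second theorem applied to both $\CC$ and $\CC^*$, and then to connect the two via a transference inequality relating the successive minima of a symmetric convex body with respect to a lattice to those of its reciprocal with respect to the reciprocal lattice. The product $V(\CC) V(\CC^*)$ is $\GL$-invariant, so no normalising linear change of coordinates is needed; I would work throughout with the self-dual lattice $\Lambda = \Zz^n$ (so $\Lambda^* = \Zz^n$).

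Concretely, let $\lambda_1 \leq \cdots \leq \lambda_n$ be the successive minima of $\CC$ with respect to $\Lambda$ and $\mu_1 \leq \cdots \leq \mu_n$ those of $\CC^*$ with respect to $\Lambda^*$. Minkowski's second theorem supplies
\[
\frac{2^n}{n!} \leq \lambda_1\cdots\lambda_n\, V(\CC) \leq 2^n, \qquad \frac{2^n}{n!} \leq \mu_1\cdots\mu_n\, V(\CC^*) \leq 2^n.
\]
Multiplying these bounds yields
\[
\frac{4^n}{(n!)^2\,\prod_{i=1}^n \lambda_i\mu_i} \;\leq\; V(\CC)\,V(\CC^*) \;\leq\; \frac{4^n}{\prod_{i=1}^n \lambda_i\mu_i}.
\]
Next I would invoke the transference inequalities
\[
1 \leq \lambda_i\,\mu_{n+1-i} \leq n!, \qquad i=1,\dots,n,
\]
between the successive minima of a symmetric convex body and those of its reciprocal with respect to the reciprocal lattice. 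Taking the product over $i$ gives $1 \leq \prod_{i=1}^n \lambda_i\mu_i \leq (n!)^n$, and substituting back produces the upper bound $V(\CC)V(\CC^*) \leq 4^n$ and the lower bound $V(\CC)V(\CC^*) \geq 4^n/(n!)^{n+2}$, so one may take $c_2(n) = 4^n$ and $c_1(n) = 4^n/(n!)^{n+2}$.

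The main obstacle is the upper transference bound $\lambda_i\,\mu_{n+1-i} \leq n!$; the companion lower bound $\lambda_i\,\mu_{n+1-i} \geq 1$ is a short duality argument (choose linearly independent vectors $\xv_1,\dots,\xv_i \in \Lambda$ realising $\lambda_1,\dots,\lambda_i$ and $\yv_1,\dots,\yv_{n+1-i} \in \Lambda^*$ realising $\mu_1,\dots,\mu_{n+1-i}$; by a dimension count some pair has $\xv_j\cdot\yv_k \neq 0$, and integrality plus the defining inequality $|\xv\cdot\yv| \leq \lambda_i\mu_{n+1-i}$ for $\xv \in \lambda_i\CC$, $\yv \in \mu_{n+1-i}\CC^*$ forces $\lambda_i\mu_{n+1-i} \geq 1$). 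The upper transference inequality is the genuinely delicate ingredient: from a basis-like system in $\Lambda$ approximately achieving the $\lambda_i$, one must manufacture a system in $\Lambda^*$ whose reciprocal norms control the $\mu_{n+1-i}$, and it is this construction—one of Mahler's central technical achievements in \cite{Mahler1939b}—that I would carry out in detail to complete the argument.
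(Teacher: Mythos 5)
Your derivation of the \emph{upper} bound $V(\CC)V(\CC^*)\le 4^n$ is correct and self-contained: Minkowski's second theorem for $\CC$ and for $\CC^*$ with respect to $\Zz^n$, combined with the easy lower transference inequality $\lambda_i\mu_{n+1-i}\ge 1$ (whose sketch you give is fine), yields it directly. The normalisation to $\Lambda=\Lambda^*=\Zz^n$ is also justified by $\GL_n$-invariance of the Mahler volume.

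The problem is the \emph{lower} bound. Your argument rests on the upper transference inequality $\lambda_i\mu_{n+1-i}\le n!$, which you explicitly leave unproved and identify as ``the genuinely delicate ingredient.'' But the paper tells you, right after Theorem~\ref{thm2.2}, exactly how Mahler obtains that inequality: ``the upper bounds are obtained by combining the lower bound in Theorem~\ref{thm2.1} with the upper bound in \eqref{2.minkowski} and the similar one for $\CC^*$ and $\Lambda^*$.'' In other words, the logical order in Mahler's work (and in this paper) is Theorem~\ref{thm2.1}~$\Rightarrow$~Theorem~\ref{thm2.2}, so invoking the transference upper bound to prove the lower volume bound is circular. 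To salvage your plan you would need a proof of $\lambda_i\mu_{n+1-i}\le c(n)$ that does not pass through the volume inequality; such proofs do exist (for instance Banaszczyk's Fourier-analytic argument giving the sharper constant $n$), but they are substantially harder than the target theorem and not what Mahler did. Mahler's own proof of $V(\CC)V(\CC^*)\ge 4^n/(n!)^2$ is a direct geometric construction (inscribing a cross-polytope in $\CC$ and trapping $\CC^*$ in the dual parallelepiped, then comparing volumes), with no successive minima or lattices appearing at all; that is the route you would need to follow to complete the lower bound without circularity.
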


Mahler proved this with $c_1(n)=4^n/(n!)^2$ and $c_2(n)=4^n$.
Santal\'{o} \cite{Santalo1949} improved the upper bound to
$c_2(n)=\kappa_n^2$
where $\kappa_n$ is the volume
of the $n$-dimensional Euclidean unit ball 
$B_n:=\{ \xv\in\Rr^n:\, \|\xv\|\leq 1\}$;
this upper bound is attained for $\CC =\CC^*=B_n$.
Bourgain and Milman
\cite{BourgainMilman1987}. 
improved the lower bound to $c_1(n)=c^n\kappa_n^2$ with some absolute constant $c$.
This is probably not optimal. Mahler conjectured that the optimal
value for $c_1(n)$ is $4^n/n!$, which is attained for $\CC$ the unit cube
$\max_i |x_i|\leq 1$ and $\CC^*$ the octahedron $\sum_{i=1}^n |x_i|\leq 1$.  

Recall that the $i$-th successive minimum
$\lambda_i (\CC ,\Lambda )$ of a symmetric convex body $\CC$ in $\Rr^n$
with respect to a lattice $\Lambda$ in $\Rr^n$ is the smallest $\lambda$
such that $\lambda\CC\cap\Lambda$ contains $i$ linearly independent points.
Thus, $\CC$ has $n$ successive minima, 
and by
Minkowski's theorem on successive minima 
\cite{Minkowski1910} one has
\begin{equation}\label{2.minkowski}
\frac{2^n}{n!}\cdot\frac{d(\Lambda )}{V(\CC )}
\leq \lambda_1(\CC ,\Lambda )\cdots \lambda_n(\CC ,\Lambda )\leq
2^n\cdot\frac{d(\Lambda )}{V(\CC )}.
\end{equation}
Mahler \cite[p.100, (A), (B)]{Mahler1939b} proved the following
\emph{transference principle} for reciprocal convex bodies:
 
\begin{thm}\label{thm2.2}
There is $c_3(n)>0$ depending only on $n$ with the following property.
Let $\Lambda$, $\CC$ be a lattice and symmetric convex body in $\Rr^n$,
and $\Lambda^*$, $\CC^*$ their respective reciprocals. Then
\[
1\leq \lambda_i(\CC ,\Lambda )\cdot \lambda_{n+1-i}(\CC^* ,\Lambda^* )\leq c_3(n).
\]
\end{thm}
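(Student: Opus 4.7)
The plan is to prove the two inequalities separately. The lower bound will follow from a simple dimension count together with the integrality of $\xv\cdot\yv$ for $\xv\in\Lambda$, $\yv\in\Lambda^*$, while the upper bound will come out of Minkowski's theorem on successive minima \eqref{2.minkowski} combined with Theorem \ref{thm2.1}.

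For the lower bound, abbreviate $\mu_i:=\lambda_i(\CC,\Lambda)$ and $\mu_j^*:=\lambda_j(\CC^*,\Lambda^*)$. By the definition of successive minima I may choose linearly independent $\xv_1,\ldots,\xv_i\in\Lambda$ with $\mu_i^{-1}\xv_k\in\CC$, and linearly independent $\yv_1,\ldots,\yv_{n+1-i}\in\Lambda^*$ with $(\mu_{n+1-i}^*)^{-1}\yv_l\in\CC^*$. The orthogonal complement of $\mathrm{span}(\yv_1,\ldots,\yv_{n+1-i})$ has dimension $i-1$, so the $i$ linearly independent vectors $\xv_1,\ldots,\xv_i$ cannot all lie inside it; hence $\xv_k\cdot\yv_l\neq 0$ for some $k,l$. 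By the definition of $\Lambda^*$ this value is an integer, so $|\xv_k\cdot\yv_l|\geq 1$. On the other hand, since $\CC$ is symmetric, the defining inequality of $\CC^*$ yields $|\xv_k\cdot\yv_l|\leq \mu_i\mu_{n+1-i}^*$. Combining the two bounds gives $\mu_i\mu_{n+1-i}^*\geq 1$.

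For the upper bound, apply \eqref{2.minkowski} to each of the pairs $(\CC,\Lambda)$ and $(\CC^*,\Lambda^*)$ and multiply. Using $d(\Lambda)\,d(\Lambda^*)=1$ and the lower half of Theorem \ref{thm2.1}, one obtains
\[
\prod_{j=1}^n \mu_j\mu_j^* \;\leq\; \frac{4^n}{V(\CC)\,V(\CC^*)} \;\leq\; \frac{4^n}{c_1(n)}.
\]
Reindexing the left-hand side as $\prod_{j=1}^n \mu_j\mu_{n+1-j}^*$ does not alter the product, and by the lower bound just proved each of its factors is at least $1$. Therefore every individual factor is at most $4^n/c_1(n)$, so one can take $c_3(n):=4^n/c_1(n)$ (which with Mahler's value $c_1(n)=4^n/(n!)^2$ yields $c_3(n)=(n!)^2$).

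The only real obstacle is conceptual: essentially all of the depth of the upper bound is absorbed into Theorem \ref{thm2.1}, which is used here as a black box. Granted that, the rest is routine bookkeeping; the one tactical point to get right is the dimension count in the lower-bound step, which is what forces the pairing of index $i$ with $n+1-i$ rather than $n-i$.
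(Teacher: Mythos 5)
Your proposal is correct and follows essentially the same route as the paper: the lower bound via the integrality of $\xv\cdot\yv$ together with a dimension count (which the paper dismisses as ``easy to prove'' without elaboration), and the upper bound by multiplying the two instances of Minkowski's theorem \eqref{2.minkowski}, invoking $d(\Lambda)d(\Lambda^*)=1$ and the lower half of Theorem~\ref{thm2.1}, and then isolating one factor using the already-established lower bounds. The resulting constant $c_3(n)=(n!)^2$ agrees with Mahler's.
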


The lower bounds for the products
$\lambda_i(\CC ,\Lambda )\lambda_{n+1-i}(\CC^* ,\Lambda^* )$ are easy to prove,
and then the upper bounds are obtained by combining 
the lower bound in Theorem \ref{thm2.1} with the upper bound in \eqref{2.minkowski}
and the similar one for $\CC^*$ and $\Lambda^*$.
With his bound for $c_1(n)$, Mahler deduced Theorem \ref{thm2.2}
with $c_3(n)=(n!)^2$.
Using instead the bound for $c_1(n)$ by
Bourgain and Milman, one obtains Theorem \ref{thm2.2} with
$c_3(n)=(c'n)^n$ for some absolute constant $c'$.
Kannan and Lov\'{a}sz \cite{KannanLovasz1988} obtained $\lambda_1(\CC ,\Lambda )\lambda_n^*(\CC^*,\Lambda^*)\leq c''n^2$ with some absolute constant
$c''$. 

Mahler's results led to various applications, among others to inhomogeneous
results. A simple consequence, implicit in
Mahler's paper \cite{Mahler1939b} is the following:

\begin{cor}\label{cor2.3}
There is $c_4(n)>0$ with the following property.
Let $\CC$, $\Lambda$, $\CC^*$ and $\Lambda^*$
be as in Theorem \ref{thm2.2} and suppose
that $\CC^*$ does not contain a non-zero point from $\Lambda^*$.
Then for every $\av\in\Rr^n$ there is $\zv\in\Lambda$ such that
$\av +\zv\in c_4(n)\CC$.
\end{cor}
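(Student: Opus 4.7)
The plan is to deduce the corollary directly from the transference principle of Theorem \ref{thm2.2}, together with an elementary approximation argument using a basis of $\Rr^n$ drawn from short vectors of $\Lambda$.

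First I would translate the hypothesis into a bound on a successive minimum. The statement that $\CC^*$ contains no non-zero point of $\Lambda^*$ says exactly that $\lambda_1(\CC^*,\Lambda^*)>1$. Applying Theorem \ref{thm2.2} with $i=n$ gives
\[
\lambda_n(\CC,\Lambda)\cdot\lambda_1(\CC^*,\Lambda^*)\leq c_3(n),
\]
and hence $\lambda_n(\CC,\Lambda)\leq c_3(n)$. By definition of the $n$-th successive minimum, there exist $n$ linearly independent lattice points $\vv_1,\ldots,\vv_n\in\Lambda$ with $\vv_j\in c_3(n)\,\CC$ for $j=1,\ldots,n$.

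Next I would use these vectors as an $\Rr$-basis of $\Rr^n$ to carry out a coordinate-wise rounding. Given an arbitrary $\av\in\Rr^n$, write uniquely $\av=\sum_{j=1}^n t_j\vv_j$ with $t_j\in\Rr$, choose integers $z_j$ with $|t_j-z_j|\leq \tfrac12$, and set $\zv:=-\sum_{j=1}^n z_j\vv_j\in\Lambda$. Then $\av+\zv=\sum_{j=1}^n (t_j-z_j)\vv_j$. Letting $F$ be the distance function of $\CC$ (which satisfies the triangle inequality because $\CC$ is convex and symmetric), one obtains
\[
F(\av+\zv)\leq\sum_{j=1}^n |t_j-z_j|\,F(\vv_j)\leq \frac{1}{2}\sum_{j=1}^n F(\vv_j)\leq \frac{n\,c_3(n)}{2},
\]
so $\av+\zv\in c_4(n)\,\CC$ with $c_4(n):=\tfrac12 n\,c_3(n)$.

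I do not expect any serious obstacle: the only subtlety is remembering that the distance function of a symmetric convex body is subadditive (here is where symmetry/convexity of $\CC$ rather than just being a star body is used), and that Theorem \ref{thm2.2} is applicable with the indices chosen so that the ``large'' minimum of $\CC$ is controlled by the ``small'' minimum of $\CC^*$. The constant produced, $c_4(n)=\tfrac12 n\,c_3(n)$, is of course not sharp, and any improvement in $c_3(n)$ (such as the one coming from Bourgain--Milman) propagates immediately to $c_4(n)$.
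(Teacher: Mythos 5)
Your argument is correct and follows the same route as the paper's sketch: use Theorem \ref{thm2.2} with $i=n$ to turn the hypothesis $\lambda_1(\CC^*,\Lambda^*)>1$ into the bound $\lambda_n(\CC,\Lambda)\le c_3(n)$, then use subadditivity of the distance function of $\CC$ together with coordinate-wise rounding in a basis of short lattice vectors. The only (cosmetic) difference is that your rounding to the nearest integer gives the slightly better constant $c_4(n)=\tfrac12 n\,c_3(n)$, where the paper states the bound $n\lambda_n(\CC,\Lambda)$.
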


\begin{proof}[Idea of proof]
Using that the distance function associated with $\CC$ satisfies the
triangle inequality, one easily shows that
for every $\av\in\Rr^n$ there is $\zv\in\Lambda$ with 
$\av +\zv\in n\lambda_n(\CC ,\Lambda )\cdot\CC$. By assumption we have $\lambda_1(\CC^*,\Lambda^*)>1$,
and thus, $\lambda_n(\CC^*,\Lambda^*)< c_3(n)$.
\end{proof}

The second application we mention is a transference principle for systems of Diophantine inequalities.
We define the maximum norm and sum-norm of
$\xv =(x_1\kdots x_n)\in\Rr^n$ by $\|\xv\|_{\infty}:=\max_i |x_i|$
and $\|\xv\|_1:=\sum_{i=1}^n |x_i|$, respectively.
We denote by $A^T$ the transpose of a matrix $A$.

\begin{cor}\label{cor2.4}
Let $m,n$ be integers with $0<m<n$ and
let $A$ be a $(n-m)\times m$-matrix with real entries 
where $m,n$ are integers with $0<m<n$.
Let $\omega$ be the supremum of the reals
$\eta >0$ 
such that there are infinitely many non-zero $\xv\in\Zz^m$ for which there exists
$\yv\in\Zz^{n-m}$ with
\begin{equation}\label{2.1}
\|A\xv- \yv\|_{\infty}\leq \|\xv\|_{\infty}^{-\frac{m}{n-m}(1+\eta )}.
\end{equation}
Further, let $\omega^*$ be the supremum of the reals $\eta^*>0$ for which
there are
infinitely many non-zero $\uv\in\Zz^{n-m}$ for which there exists $\vv\in\Zz^m$
such that
\begin{equation}\label{2.1a}
\|A^T\uv -\vv\|_{\infty}\leq \|\uv\|_{\infty}^{-\frac{n-m}{m}(1+\eta^*)}. 
\end{equation}
Then
\begin{equation}\label{2.1b}
\omega^*\geq \frac{\omega}{(m-1)\omega +n-1},\ \ 
\omega\geq \frac{\omega^*}{(n-m-1)\omega^* +n-1}.
\end{equation}
\end{cor}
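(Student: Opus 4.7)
The plan is to apply Mahler's transference Theorem~\ref{thm2.2} to a one-parameter family of symmetric convex bodies in $\Rr^n$ built from $A$, with the self-reciprocal lattice $\Zz^n$. Fix any $\eta<\omega$, so that infinitely many $\xv\in\Zz^m$ admit a $\yv\in\Zz^{n-m}$ for which \eqref{2.1} holds. For such an $\xv$ with $X:=\|\xv\|_\infty$ large, set
\[
\CC_Q:=\bigl\{(\xv',\yv')\in\Rr^m\times\Rr^{n-m}:\|\xv'\|_\infty\leq Q,\ \|A\xv'-\yv'\|_\infty\leq Q^{-m/(n-m)}\bigr\}.
\]
The unimodular map $(\xv',\yv')\mapsto(\xv',A\xv'-\yv')$ sends $\CC_Q$ onto a product of two boxes, so $V(\CC_Q)=2^n$ independently of $Q$. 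A short optimisation of the two constraints defining $(\xv,\yv)\in t\CC_Q$ selects $Q=X^{(n+m\eta)/n}$ and yields $\lambda_1(\CC_Q,\Zz^n)\leq X^{-m\eta/n}$. Ranging over the infinitely many admissible $\xv$ makes $\lambda_1(\CC_Q,\Zz^n)$ arbitrarily small.

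Next I transfer to $\CC_Q^*$. Theorem~\ref{thm2.2} gives $\lambda_n(\CC_Q^*,\Zz^n)\geq\lambda_1(\CC_Q,\Zz^n)^{-1}$, while Minkowski's second theorem \eqref{2.minkowski} applied to $\CC_Q^*$, combined with the lower bound $V(\CC_Q^*)\geq c_1(n)/V(\CC_Q)$ from Theorem~\ref{thm2.1}, bounds $\prod_{i=1}^n\lambda_i(\CC_Q^*,\Zz^n)\ll_n 1$. Dividing this product bound by the lower bound on $\lambda_n$ and using $\lambda_i(\CC_Q^*)\geq\lambda_1(\CC_Q^*)$ leaves
\[
\lambda_1(\CC_Q^*,\Zz^n)^{n-1}\ll_n\lambda_1(\CC_Q,\Zz^n)\leq X^{-m\eta/n},
\]
hence $\lambda_1(\CC_Q^*,\Zz^n)\ll_n X^{-m\eta/(n(n-1))}$.

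A direct computation of the polar (equivalently, transporting the product-of-boxes picture by the transpose of the unimodular map used above) shows
\[
\CC_Q^*=\bigl\{(\uv,\vv)\in\Rr^m\times\Rr^{n-m}:Q\|\uv+A^T\vv\|_1+Q^{-m/(n-m)}\|\vv\|_1\leq 1\bigr\},
\]
so the lattice point $(\uv,\vv)\in\Zz^n\setminus\{\nullv\}$ attaining $\lambda_1(\CC_Q^*,\Zz^n)$ satisfies $\|\uv+A^T\vv\|_\infty\leq\lambda_1(\CC_Q^*,\Zz^n)/Q$ and $\|\vv\|_\infty\leq\lambda_1(\CC_Q^*,\Zz^n)\cdot Q^{m/(n-m)}$. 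Substituting the bounds on $\lambda_1(\CC_Q^*,\Zz^n)$ and on $Q$ produces $\|\uv+A^T\vv\|_\infty\ll_n X^{-\beta}$ and $\|\vv\|_\infty\ll_n X^{\alpha}$, with $\beta=(n-1+m\eta)/(n-1)$ and $\alpha=m((n-1)+(m-1)\eta)/((n-1)(n-m))$. After the relabelling $\uv_\star:=\vv$, $\vv_\star:=-\uv$ that matches the variables in \eqref{2.1a}, the elimination of $X$ is the key algebraic step: a direct check gives $m\beta/((n-m)\alpha)=(n-1+m\eta)/((n-1)+(m-1)\eta)$, which says precisely that \eqref{2.1a} holds with every $\eta^*<\eta/((m-1)\eta+n-1)$.

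The remaining obstacle is to extract \emph{infinitely many distinct} dual solutions $\uv_\star$. For $X$ large, $\|\uv+A^T\vv\|_\infty<1$, so $\vv=\nullv$ would force $\uv=\nullv$ and contradict $(\uv,\vv)\neq\nullv$; hence $\uv_\star\neq\nullv$. If only finitely many $\uv_\star$ arose, some $\uv_\star\in\Zz^{n-m}\setminus\{\nullv\}$ would have to satisfy $A^T\uv_\star\in\Zz^m$ exactly, which makes $\omega^*=+\infty$ and renders the inequality trivial. Otherwise the $\uv_\star$ are infinite in number, so $\|\uv_\star\|_\infty\to\infty$ along a subsequence, and the bound on $\eta^*$ passes to the limit. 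Letting $\eta\uparrow\omega$ and using monotonicity of $t\mapsto t/((m-1)t+n-1)$ gives the first inequality in \eqref{2.1b}. The second follows by repeating the argument with $A$ replaced by $A^T$ and $m$ interchanged with $n-m$.
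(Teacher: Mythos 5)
Your proof is correct and follows the paper's argument in substance: build a parametrized family of parallelepipeds from $A$, transfer via Mahler's Theorem~\ref{thm2.2} together with Minkowski's inequality \eqref{2.minkowski} to a short lattice vector in the polar body, read off the dual approximation from the two defining constraints, and treat the degenerate case (eventually constant $\uv_\star$, forcing $A^T\uv_\star\in\Zz^m$) separately. The differences are bookkeeping only: you volume-normalize $\CC_Q$ and absorb $\eta$ into the scaling $Q=X^{(n+m\eta)/n}$ whereas the paper builds $\eta$ into the aspect ratio so that $V(\CC_Q)\ll Q^{-m\eta}$, and you use the easy lower bound of Theorem~\ref{thm2.2} plus Theorem~\ref{thm2.1} applied to $\CC_Q^*$ where the paper uses the upper bound of Theorem~\ref{thm2.2} plus the lower Minkowski bound on $\CC_Q$, but both chains yield the same exponents and the same conclusion.
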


These inequalities were proved by Dyson \cite{Dyson1947}.
The special case $m=1$ was established earlier by Khintchine
\cite{Khintchine1925,Khintchine1926}
and became known as \emph{Khintchine's transference principle}.
Jarn\'{i}k \cite{Jarnik1959} proved that
both inequalities are best possible.

\begin{proof}
We prove only the first inequality; then the second follows by symmetry.

Let $Q\geq 1$, $0<\eta <\omega$. Put $\eta^*:=\medfrac{\eta}{(m-1)\eta +n-1}$.
Consider the convex body $\CC_Q$ consisting of the points
$(\xv ,\yv )\in\Rr^m\oplus \Rr^{n-m} =\Rr^{n}$ with $\|\xv\|_{\infty}\leq Q$
and $\|A\xv-\yv\|_{\infty}\leq Q^{-\frac{m}{n-m}(1+\eta )}$.
Denote the successive minima of $\CC_Q$, $\CC_Q^*$, respectively
with respect to $\Zz^n$ by
$\lambda_i(Q)$, $\lambda_i^*(Q)$,
for $i=1\kdots n$.
By the choice of $\eta$, there is a sequence of $Q\to\infty$ such that
$\lambda_1(Q)\leq 1$. Let $Q$ be from this sequence.
The body $\CC_Q$ has volume $V(\CC_Q)\ll Q^{-m\eta}$. The reciprocal body $\CC_Q^*$
of $\CC_Q$ is the
set of $(\uv ,\vv )\in\Rr^{n-m}\oplus\Rr^m$ with $Q\|A^T\uv -\vv\|_1+Q^{-\frac{m}{n-m}(1+\eta )}\|\uv\|_1\leq 1$. Combining Theorem \ref{thm2.2}
with the lower bound in \eqref{2.minkowski}, we infer that 
\[
\lambda_1^*(Q)\ll \lambda_n (Q)^{-1}
\ll\big(V(\CC_Q)\cdot \lambda_1(Q)\big)^{1/(n-1)}
\ll Q^{-m\eta/(n-1)},
\]
where the implied constants depend on $m$ and $n$.
The body $\lambda_1^*(Q)\CC_Q^*$ contains
a non-zero point $(\uv ,\vv )\in\Zz^{n-m}\oplus\Zz^m$, and thus,
\begin{eqnarray*}
&&\|\uv\|_{\infty}\ll Q^{\frac{m}{n-m}(1+\eta )-\frac{m\eta}{n-1}}=:Q',
\\
&&\|A^T\uv-\vv\|_{\infty}\ll Q^{-1-\frac{m\eta}{n-1}}=
Q'^{-\frac{n-m}{m}(1+\eta^*)}.
\end{eqnarray*}
If there is a non-zero $\uv_0\in\Zz^{n-m}$ with $A^T\uv_0 =\vv_0$ for some
$\vv\in\Zz^m$ then \eqref{2.1a} holds with all integer multiples 
of $(\uv_0 ,\vv_0 )$. 
Otherwise, if we let $Q\to\infty$ then $\uv$ runs through
an infinite set.
The first inequality of \eqref{2.1b} easily follows.
\end{proof}

\section{Compound convex bodies}\label{section3}

Mahler extended his theory of reciprocal convex bodies to so-called
\emph{compound convex bodies}, which are in some sense exterior
powers of convex bodies. 

Let again $n\geq 2$ be an integer and $p$ an integer with $1\leq p\leq n-1$.
Put $N:=\medbinom{n}{p}$ and denote by $\II_{n,p}$
the collection of $N$ integer tuples
$(i_1\kdots i_p)$ with $1\leq i_1<\cdots <i_p\leq n$. 
Let $\{\ev_1\kdots\ev_n\}$ be the standard basis
of $\Rr^n$ (i.e., $\ev_i$ has a $1$ on the $i$-th place and zeros elsewhere)
and $\{\widehat{\ev}_1\kdots\widehat{\ev}_N\}$ the standard basis of $\Rr^N$.
We define exterior products of $p$ vectors by means of the multilinear map
$(\xv_1\kdots\xv_p)\mapsto \xv_1\wedge\cdots\wedge\xv_p$ from $(\Rr^n)^p$ to $\Rr^N$, which is such that
$\ev_{i_1}\wedge\cdots\wedge\ev_{i_p}=\widehat{\ev}_j$ for $j=1\kdots N$ if
$(i_1\kdots i_p)$ is the $j$-th tuple of $\II_{n,p}$ in the lexicographic
ordering, and such that $\xv_1\wedge\cdots\wedge\xv_p$
changes sign if two of the vectors are interchanged.

Let $\CC$ be a symmetric body in $\Rr^n$ and $\Lambda$ a
lattice in $\Rr^n$. Then the
\emph{$p$-th compound} $\CC_p$ of $\CC$ is defined as the convex hull
of the points $\xv_1\wedge\cdots\wedge\xv_p\in\Rr^N$ with $\xv_1\kdots\xv_p\in\CC$,
while the $p$-th compound $\Lambda_p$ of $\Lambda$ is the lattice in $\Rr^N$
generated by the points $\xv_1\wedge\cdots\wedge\xv_p$ with
$\xv_1\kdots\xv_p\in\Lambda$. Then $d(\Lambda_p)=d(\Lambda )^P$ where
$P:=\binom{n-1}{p-1}$. Mahler \cite[Theorem 1]{Mahler1955} proved the following
analogue for the volume of the $p$-th compound
of a symmetric convex body. 

\begin{thm}\label{thm3.1}
Let $\CC$ be any symmetric convex body
in $\Rr^n$ and $p$ any integer with $1\leq p\leq n-1$. Then
\[
c_1(n,p)\leq V(\CC_p)\cdot V(\CC )^{-P}\leq c_2(n,p),
\]
where $c_1(n,p)$, $c_2(n,p)$ are positive numbers depending only
on $n$ and $p$.
\end{thm}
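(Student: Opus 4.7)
The plan is to show that the ratio $V(\CC_p)\cdot V(\CC)^{-P}$ is invariant under invertible linear transformations of $\Rr^n$, and then to pin it down between two absolute (i.e., $n,p$-dependent) constants by applying John's theorem to reduce to the case where $\CC$ is sandwiched between two balls.

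First I would check the transformation rule. If $T\in \GL_n(\Rr)$, then on one hand $V(T\CC)=|\det T|\cdot V(\CC)$, so $V(T\CC)^P=|\det T|^P\cdot V(\CC)^P$. On the other hand, the multilinearity of the wedge product gives
\[
(T\xv_1)\wedge\cdots\wedge (T\xv_p)=(\wedge^p T)(\xv_1\wedge\cdots\wedge\xv_p),
\]
so $(T\CC)_p=(\wedge^p T)(\CC_p)$, and the classical identity
$\det(\wedge^p T)=(\det T)^{\binom{n-1}{p-1}}=(\det T)^{P}$ (each eigenvalue of $T$ appears in exactly $\binom{n-1}{p-1}$ eigenvalues of $\wedge^p T$) yields $V((T\CC)_p)=|\det T|^P\cdot V(\CC_p)$. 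Hence $V(\CC_p)\cdot V(\CC)^{-P}$ is $\GL_n(\Rr)$-invariant.

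By John's theorem, for every symmetric convex body $\CC$ in $\Rr^n$ there exists $T\in\GL_n(\Rr)$ with $B_n\subseteq T\CC\subseteq \sqrt{n}\,B_n$. By the invariance just proved, I may therefore assume $B_n\subseteq \CC\subseteq \sqrt{n}\,B_n$. Using monotonicity of the $p$-th compound ($\CC\subseteq\CC'\Rightarrow \CC_p\subseteq \CC'_p$, which is immediate from the definition) together with the scaling rule $(t\CC)_p=t^p\CC_p$ (so $V((t\CC)_p)=t^{pN}V(\CC_p)$ in $\Rr^N$), I obtain
\[
V((B_n)_p)\leq V(\CC_p)\leq n^{pN/2}V((B_n)_p),\qquad
\kappa_n^P\leq V(\CC)^P\leq n^{nP/2}\kappa_n^P,
\]
so
\[
n^{-nP/2}\kappa_n^{-P}V((B_n)_p)\leq \frac{V(\CC_p)}{V(\CC)^P}\leq n^{pN/2}\kappa_n^{-P}V((B_n)_p),
\]
and one can read off admissible values of $c_1(n,p)$ and $c_2(n,p)$.

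The only genuinely non-routine point is the determinant identity $\det(\wedge^p T)=(\det T)^P$, which I would establish by diagonalising $T$ over $\Cc$ (or by density after first verifying it for diagonal matrices) and counting, for each eigenvalue $\lambda_i$ of $T$, the number of tuples $(i_1<\dots<i_p)$ containing $i$, which is $\binom{n-1}{p-1}=P$. Everything else — the monotonicity of $\CC\mapsto\CC_p$, the scaling behavior, the reduction via John's theorem — is either built into the definition or is a standard volume calculation.
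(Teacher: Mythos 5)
Your proof is correct and follows essentially the same route as the paper's sketch: establish $\GL_n(\Rr)$-invariance of the ratio $V(\CC_p)\cdot V(\CC)^{-P}$ (so that it is a fixed constant for ellipsoids) and then invoke John's theorem together with monotonicity of $\CC\mapsto\CC_p$ to transfer the bound to arbitrary symmetric convex bodies. You have merely filled in the details the paper leaves implicit, namely the identity $(T\CC)_p=(\wedge^p T)(\CC_p)$ and $\det(\wedge^p T)=(\det T)^{\binom{n-1}{p-1}}$, and you have phrased John's theorem as $B_n\subseteq T\CC\subseteq\sqrt{n}\,B_n$ rather than $n^{-1/2}\EE\subseteq\CC\subseteq\EE$, which is equivalent.
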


\begin{proof}[Idea of proof]
The quotient $V(\CC_p)\cdot V(\CC )^{-P}$ is invariant under linear
transformations, so Theorem \ref{thm3.1} holds 
for ellipsoids, these are the images of the Euclidean unit
ball $B_n:=\{ \xv\in\Rr^n:\, \|\xv\|\leq 1\}$ 
under linear transformations. 
Now the theorem follows for arbitrary symmetric convex bodies $\CC$,
with different $c_1(n,p)$, $c_2(n,p)$,  
by invoking John's theorem \cite{John1948}, 
which asserts that for every symmetric convex body $\CC$
in $\Rr^n$ there is an ellipsoid $\EE$ such that 
$n^{-1/2}\EE\subseteq\CC\subseteq \EE$.
\end{proof}

Mahler \cite[Theorem 3]{Mahler1955} deduced from this the following result on the successive minima of a compound convex body.

\begin{thm}\label{thm3.2}
Let $\CC$ be a symmetric convex body and $\Lambda$ a lattice in $\Rr^n$,
and let $p$ be any integer with $1\leq p\leq n-1$. 
Further, let $\mu_1\kdots \mu_N$,
where $N=\medbinom{n}{p}$, be the products 
$\lambda_{i_1}(\CC,\Lambda )\cdots\lambda_{i_p}(\CC ,\Lambda )$
($(i_1\kdots i_p)\in\II_{n,p}$) in non-decreasing order. Then for the 
successive minima of $\CC_p$ with respect to $\Lambda_p$ we have
\[
c_3(n,p)\leq\frac{\lambda_i(\CC_p,\Lambda_p)}{\mu_i}\leq c_4(n,p)\ \ 
\mbox{for }i=1\kdots N,
\]
where $c_3(n,p),\, c_4(n,p)$ depend on $n$ and $p$ only.
\end{thm}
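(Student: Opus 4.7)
The plan is to handle the two inequalities in Theorem \ref{thm3.2} by separate methods. The upper bound $\lambda_i(\CC_p,\Lambda_p)\leq c_4(n,p)\mu_i$ will be obtained with $c_4(n,p)=1$ by constructing $N$ explicit linearly independent lattice points in $\Lambda_p$ of the right size. The lower bound will then follow by combining this upper bound with two applications of Minkowski's theorem on successive minima \eqref{2.minkowski} together with Theorem \ref{thm3.1}.

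For the upper bound, I would fix linearly independent $\vv_1\kdots\vv_n\in\Lambda$ with $\vv_j\in\lambda_j(\CC,\Lambda)\CC$. For each $(i_1\kdots i_p)\in\II_{n,p}$, the point $\vv_{i_1}\wedge\cdots\wedge\vv_{i_p}$ lies in $\Lambda_p$. Writing $\vv_{i_k}=\lambda_{i_k}(\CC,\Lambda)\wv_{i_k}$ with $\wv_{i_k}\in\CC$ and using multilinearity, this point lies in $\lambda_{i_1}(\CC,\Lambda)\cdots\lambda_{i_p}(\CC,\Lambda)\CC_p$, since $\CC_p$ contains every wedge of $p$ vectors from $\CC$. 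Because $\vv_1\kdots\vv_n$ are linearly independent in $\Rr^n$, the $N$ wedges are linearly independent in $\Rr^N$: up to sign they are the image of the standard basis under the induced map on the $p$-th exterior power, which is invertible whenever the matrix with columns $\vv_1\kdots\vv_n$ is. Reordering these wedges so that their scalar factors increase (i.e., according to $\mu_1\leq\cdots\leq\mu_N$), the first $i$ of them give $i$ linearly independent elements of $\Lambda_p$ lying in $\mu_i\CC_p$, whence $\lambda_i(\CC_p,\Lambda_p)\leq\mu_i$.

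For the lower bound, I would compare the two products $\prod_{i=1}^N\lambda_i(\CC_p,\Lambda_p)$ and $\prod_{i=1}^N\mu_i$. Applying the lower bound of \eqref{2.minkowski} to $(\CC_p,\Lambda_p)$, using $d(\Lambda_p)=d(\Lambda)^P$ and the upper bound of Theorem \ref{thm3.1}, and then using the upper bound of \eqref{2.minkowski} for $(\CC,\Lambda)$, one obtains
\[
\prod_{i=1}^N\lambda_i(\CC_p,\Lambda_p)\geq c'(n,p)\left(\frac{d(\Lambda)}{V(\CC)}\right)^P\geq c''(n,p)\prod_{j=1}^n\lambda_j(\CC,\Lambda)^P.
\]
The combinatorial identity that each $j\in\{1\kdots n\}$ appears in exactly $P=\medbinom{n-1}{p-1}$ tuples of $\II_{n,p}$ yields $\prod_{j=1}^n\lambda_j(\CC,\Lambda)^P=\prod_{i=1}^N\mu_i$, and so $\prod_i\lambda_i(\CC_p,\Lambda_p)\geq c''(n,p)\prod_i\mu_i$. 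Combined with the upper bound $\lambda_i(\CC_p,\Lambda_p)/\mu_i\leq 1$ for every $i$, this forces each individual ratio to be at least $c''(n,p)$, which is the desired lower bound.

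There is no real obstacle of substance. The two points requiring care are the linear independence of the $N$ wedges in the upper-bound step, which reduces to invertibility of the induced map on the $p$-th exterior power, and the combinatorial identity $\prod_i\mu_i=\prod_j\lambda_j(\CC,\Lambda)^P$, which is immediate by double counting. Everything else is bookkeeping of the constants coming from Minkowski's theorem and Theorem \ref{thm3.1}.
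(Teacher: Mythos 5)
Your proposal is correct and follows essentially the same route as the paper's sketch: the upper bound $\lambda_i(\CC_p,\Lambda_p)\le\mu_i$ comes from the $N$ linearly independent wedges $\vv_{i_1}\wedge\cdots\wedge\vv_{i_p}\in\Lambda_p$, and the lower bound comes from comparing $\prod_i\lambda_i(\CC_p,\Lambda_p)$ with $\prod_i\mu_i=(\lambda_1\cdots\lambda_n)^P$ via Minkowski's theorem \eqref{2.minkowski} and Theorem~\ref{thm3.1}. The only difference is that you spell out the final ``divide out'' step (using $\lambda_i\le\mu_i$ termwise together with the product bound to force each ratio up), which the paper leaves as ``one easily deduces.''
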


\begin{proof}[Idea of proof]
Constants implied by $\ll$ and $\gg$ will depend on $n$ and $p$ only. 
Let $\vv_1\kdots\vv_n$ be linearly independent vectors of $\Lambda$
with $\vv_i\in\lambda_i\CC$, where $\lambda_i=\lambda_i(\CC ,\Lambda )$ for
$i=1\kdots n$. Then for each tuple
$(i_1\kdots i_p)\in\II_{n,p}$
we have $\vv_{i_1}\wedge\cdots\wedge\vv_{i_p}\in\lambda_{i_1}\cdots\lambda_{i_p}\CC_p$.
Since the vectors $\vv_{i_1}\wedge\cdots\wedge\vv_{i_p}$ are linearly
independent elements of $\Lambda_p$, it follows that 
$\lambda_i(\CC_p,\Lambda_p)\leq \mu_i$ for $i=1\kdots N$.
On the other hand, by the lower bound of \eqref{2.minkowski} applied to $\CC_p$, $\Lambda_p$
we have $\prod_{i=1}^N \lambda_i(\CC_p,\Lambda_p)\gg d(\Lambda_p)/V(\CC_p)$ and
by the upper bound of \eqref{2.minkowski},  
$\mu_1\cdots\mu_N=(\lambda_1\cdots\lambda_n)^P\ll (d(\Lambda )/V(\CC ))^P$.
By combining this with Theorem \ref{thm3.1}, one
easily deduces Theorem \ref{thm3.2}.
\end{proof}

Mahler's results on compound convex bodies are in fact
generalizations of his results on reciprocal bodies.
To make this precise, let $\CC$ be a symmetric convex body
and $\Lambda$ a lattice in $\Rr^n$ and 
let $\CC^*$, $\Lambda^*$ be their reciprocals.
Then
$\Lambda^*=d(\Lambda )^{-1}\varphi(\Lambda_{n-1})$
where $\varphi$ is the linear map given by
$(x_1\kdots x_n)\mapsto (x_n,-x_{n-1}\kdots (-1)^{n-1}x_1)$.
Further, by an observation of Mahler
\cite[Theorem 4]{Mahler1955},
\[ 
c_5(n)V(\CC)^{-1}\varphi(\CC_{n-1})\subseteq\CC^*\subseteq c_6(n)V(\CC)^{-1}\varphi(\CC_{n-1})
\]
for certain numbers $c_5(n)$, $c_6(n)$ depending only on $n$.
Together with these facts,
Theorems \ref{thm3.1} and \ref{thm3.2} immediately imply
Theorems \ref{thm2.1} and \ref{thm2.2} in a slightly weaker form.

As Mahler already observed in \cite{Mahler1955}, it may be quite
difficult to compute the compounds of a given convex body,
but often one can give an approximation which for applications
is just as good. For instance, let $\av_1\kdots\av_n$ be linearly
independent vectors in $\Rr^n$ and $A_1\kdots A_n$ positive reals,
and consider the parallelepiped
\[
\Pi :=\{ \xv\in\Rr^n:\, |\av_i\cdot \xv |\leq A_i\ \mbox{for } i=1\kdots n\},
\]
where $\cdot$ denotes the standard inner product.
Let $1\leq p\leq n-1$, $N=\medbinom{n}{p}$ and define for $i=1\kdots N$,
\begin{equation}\label{3.0}
\widehat{\av_i}:=\av_{i_1}\wedge\cdots\wedge\av_{i_p},\ \ 
\widehat{A}_i:=A_{i_1}\cdots A_{i_p},
\end{equation}
where $(i_1\kdots i_p)$ is the $i$-th tuple of $\II_{n,p}$
in the lexicographic ordering. Then the \emph{$p$-th pseudocompound}
of $\Pi$ is given by
\[
\widehat{\Pi}_p:=\{ \widehat{\xv}\in\Rr^N:\, |\widehat{\av}_i\cdot \widehat{\xv}|\leq\widehat{A}_i\ \mbox{for } i=1\kdots N\}.
\]
One easily shows (see \cite[p. 377]{Mahler1955}), 
that there are positive numbers $c_7(n,p)$,
$c_8(n,p)$ such that $c_7(n,p)\Pi_p\subseteq \widehat{\Pi}_p\subseteq c_8(n,p)\Pi_p$,
where $\Pi_p$ is the $p$-th compound of $\Pi$.
This implies that Theorem \ref{thm3.2} holds with $\widehat{\Pi}_p$ instead
of $\Pi_p$, with other constants $c_3(n,p),c_4(n,p)$.  

Mahler's results on compound convex bodies turned out to be a very
important tool in Diophantine approximation. First,
it is a crucial ingredient in W.M. Schmidt's
proof of his celebrated Subspace Theorem
\cite{Schmidt1972,Schmidt1980}, and second it has been used
to deduce several transference principles for systems of Diophantine
inequalities. 

We first give a very brief overview of Schmidt's proof
of his Subspace Theorem, focusing on the role of Theorem \ref{thm3.2}.
For the complete proof, 
see \cite{Schmidt1980}.

\begin{subspacetheorem*}
Let $n\geq 2$ and let $L_i(\Xv)=\alpha_{i1}X_1+\cdots +\alpha_{in}X_n$
$(i=1\kdots n)$ be linearly independent linear forms with algebraic coefficients
in $\Cc$. Further, let $\delta >0$. Then the set of solutions of
\begin{equation}\label{3.1}
|L_1(\xv )\cdots L_n(\xv )|\leq \|\xv\|^{-\delta}\ \ \mbox{in } \xv\in\Zz^n
\end{equation}
is contained in finitely many proper linear subspaces of $\Qq^n$.
\end{subspacetheorem*}

\begin{proof}[Outline of the proof]
We can make a reduction to the case that $L_1\kdots L_n$ all have real
algebraic coefficients by replacing each $L_i$ by its real or imaginary part,
such that the resulting linear forms are linearly independent. 
Further, after a normalization we arrange that these linear forms
have determinant $1$.
So henceforth
we assume that the coefficients of $L_1\kdots L_n$ are real algebraic,
with $\det (L_1\kdots L_n)=1$.
Next, it suffices to consider only $\xv\in\Zz^n$ with 
$L_i(\xv )\not= 0$ for $i=1\kdots n$. 

Now let $\xv\in\Zz^n$ be a solution
of \eqref{3.1} and put 
\begin{eqnarray*}
&&A_i:=|L_i(\xv )|/|L_1(\xv )\cdots L_n(\xv )|^{1/n}\ (i=1\kdots n),\\
&&\Av :=(A_1\kdots A_n),\ \
Q(\Av ):=\max (A_1\kdots A_n).
\end{eqnarray*}
With this choice, $A_1\cdots A_n=1$.
Assuming that $\|\xv\|$ is sufficiently large, there is a fixed $D>0$ independent
of $\xv$, 
such that $\|\xv\|^{-D}\leq |L_i(\xv )|\leq \|\xv\|^D$
for $i=1\kdots n$.
Hence
$Q(\Av )\leq \|\xv\|^{2D}$. 
Write $L_i(\Xv )=\av_i\cdot\Xv$ where $\av_i$ is the vector 
of coefficients of $L_i$ and consider 
the parallelepiped
\begin{equation}\label{3.2}
\Pi (\Av ):=\{ \yv\in\Rr^n:\ |\av_i\cdot\yv|\leq A_i\ \mbox{for } i=1\kdots n\}.
\end{equation}
Since $|L_1(\xv )\cdots L_n(\xv )|^{1/n}\leq \|\xv\|^{-\delta/ n}\leq Q(\Av )^{-\delta_1}$ with $\delta_1:=\delta /2nD$, we have
\[
\xv\in Q(\Av )^{-\delta_1}\Pi(\Av ).
\]
Let $T(\Av )$ denote
the vector space generated by
$Q(\Av )^{-\delta_1}\Pi(\Av )\cap\Zz^n$. So $\xv\in T(\Av )$. 
It clearly suffices to show the following:
\\[0.15cm]
\emph{for every $\delta_1>0$
there is a finite collection $\{ T_1\kdots T_t\}$ of proper linear subspaces
of $\Qq^n$ such that for every $n$-tuple $\Av$ of positive reals 
with $A_1\cdots A_n=1$, the vector space
$T(\Av )$ is contained in one of $T_1\kdots T_t$.}
\vskip0.15cm

Assume that this assertion is false. 
Pick many tuples $\Av_1\kdots\Av_m$ such that the spaces
$T^{(i)}:=T(\Av_i)$ ($i=1\kdots m$) are all different. Then one can construct
a polynomial in $m$ blocks of $n$ variables $\Xv_1\kdots\Xv_m$  with integer coefficients, which is homogeneous in each block and divisible 
by high powers of $L_i(\Xv_j)$, for $i=1\kdots n$, $j=1\kdots m$. 
All partial derivatives of this polynomial of order up to a certain bound
have absolute value $<1$, hence are $0$,
at many integral points
of $T^{(1)}\times\cdots\times T^{(m)}$. Then by extrapolation, it follows
that this polynomial vanishes with high multiplicity on all of
$T^{(1)}\times\cdots\times T^{(m)}$. Now one would like to apply a 
non-vanishing result implying that this is impossible, but such a 
result can been proved only if the dimensions of
$T^{(1)}\kdots T^{(m)}$ are equal to $n-1$. So the above argument
works only for those tuples $\Av$ for which $\dim T(\Av )=n-1$,
that is, for which the $(n-1)$-th successive minimum of $\Pi (\Av)$
with respect to $\Zz^n$
is at most $Q(\Av )^{-\delta_1}$.

Now Schmidt could make his proof of the Subspace Theorem work 
for arbitrary tuples $\Av$ by means of an ingenious argument,
in which he constructs from the parallelepiped $\Pi (\Av )$ a
new parallelepiped $\widehat{\Pi}(\widehat{\Bv})$, in general of larger dimension $N$, with $\widehat{\Bv}=(\widehat{B}_1\kdots\widehat{B}_N)$
satisfying $\widehat{B}_1\cdots\widehat{B}_N=1$,
of which the $(N-1)$-th successive minimum with respect to $\Zz^N$
is small.
In this construction, Mahler's results on compound convex bodies play
a crucial role.

In what follows, constants implies by $\ll$, $\gg$, $\asymp$ will depend
only on $n$, $\delta_1$ and $L_1\kdots L_n$, while $\delta_2,\delta_3,\ldots$
will denote positive numbers depending only on $\delta_1$ and $n$.
Denote the successive minima of $\Pi (\Av )$ with respect to $\Zz^n$ by
$\lambda_1\kdots\lambda_n$. Then clearly,
\[ 
\lambda_1\leq  Q(\Av )^{-\delta_1}.
\]
Further, by \eqref{2.minkowski},
\begin{equation}\label{3.4}
\lambda_1\cdots\lambda_n\asymp 1. 
\end{equation}
Let $k$ be the largest
index with $\lambda_k\leq Q(\Av )^{-\delta_1}$. Then \eqref{3.4}
implies that $\lambda_n\gg Q(\Av )^{k\delta_1/(n-k)}$. Hence 
there is $p$ with $k\leq n-p\leq n-1$ such that
$\lambda_{n-p}/\lambda_{n-p+1}\ll Q(\Av )^{-\delta_2}$.
Let $S(\Av )$ be the vector space generated by
$\lambda_{n-p}\Pi (\Av )\cap\Zz^n$. This space contains $T(\Av )$.
So it suffices to prove that \emph{$S(\Av )$ runs through a finite collection
of proper linear subspaces of $\Qq^n$.}

Let $N:=\medbinom{n}{p}$ and consider the $p$-th pseudocompound
\[
\widehat{\Pi}_p(\widehat{\Av})=\{\widehat{\yv}\in\Rr^N:\, |\widehat{\av}_i\cdot \widehat{\yv}|\leq \widehat{A}_i\ \ \mbox{for } i=1\kdots N\}.
\]
Denote by $\widehat{\lambda}_1\kdots\widehat{\lambda}_N$ the successive minima
of $\widehat{\Pi}_p(\widehat{\Av})$ with respect to $\Zz^n$.
Then by Theorem \ref{thm3.2} we have for the last two minima,
$\widehat{\lambda}_{N-1}\asymp \lambda_{n-p}\lambda_{n-p+2}\cdots\lambda_n$,
$\widehat{\lambda}_N\asymp\lambda_{n-p+1}\cdots\lambda_n$.
Hence
\begin{equation}\label{3.6}
\widehat{\lambda}_{N-1}/\widehat{\lambda}_N\ll \lambda_{n-p}/\lambda_{n-p+1}\ll Q(\Av )^{-\delta_2}.
\end{equation}
Moreover, by \eqref{3.4}, Theorem \ref{thm3.2} we have
\begin{equation}\label{3.7}
\widehat{\lambda}_1\cdots\widehat{\lambda}_N\asymp 1.
\end{equation}

We still need one reduction step. By a variation on a result of
Davenport, proved by Schmidt (see e.g., \cite[p. 89]{Schmidt1980}),
for every choice of reals $\rho_1\kdots\rho_N$ with
\[
\rho_1\geq\cdots \geq \rho_N>0,\ \ \ 
\rho_1\widehat{\lambda}_1\leq\cdots\leq \rho_N\widehat{\lambda}_N,
\ \ \
\rho_1\cdots\rho_N=1,
\]
there is a permutation $\sigma$ of $1\kdots N$ such that the parallelepiped
\[
\widehat{\Pi}_p(\widehat{\Bv})=
\{ \widehat{\yv}\in\Rr^N:\, |\widehat{\av}_i\cdot\widehat{\yv}|\leq \widehat{B}_i\ 
\mbox{for } i=1\kdots N\},
\]
where $\widehat{B}_i:=\rho_{\sigma (i)}^{-1}\widehat{A}_i$ for $i=1\kdots N$, 
has successive minima $\widehat{\lambda}_i'\asymp\rho_i\widehat{\lambda}_i$
for $i=1\kdots N$. Now with the choice
\[
\rho_i=c/\widehat{\lambda}_i\  (i=1\kdots N-1),\ \ \rho_N=c/\widehat{\lambda}_{N-1}
\]
where 
\[
c=(\widehat{\lambda_1}\cdots \widehat{\lambda_N})^{1/N}(\widehat{\lambda}_{N-1}/\widehat{\lambda}_{N})^{1/N}
\]
has been chosen to make $\rho_1\cdots\rho_N=1$,
we obtain $\widehat{\lambda}_{N-1}'\ll c\ll Q(\Av )^{-\delta_3}$
in view of  \eqref{3.6},\eqref{3.7}.
One can show that
\[ 
Q(\widehat{\Bv}):=\max (\widehat{B}_1,\kdots \widehat{B}_N)\ll Q(\Av )^d
\]
with $d$ depending
only on $n$ and $p$. Thus, $\widehat{\lambda}_{N-1}'\ll Q(\widehat{\Bv})^{-\delta_4}$.
Further,
\[
\widehat{B}_1\cdots\widehat{B}_N=\rho_1\cdots\rho_N(A_1\cdots A_n)^{\binom{n-1}{p-1}}=1.
\] 

Now by means of the argument sketched above, with the construction of the polynomial
and the application of the non-vanishing result, one can show that 
if $\Av=(A_1\kdots A_n)$ runs through the tuples of positive reals
with $A_1\cdots A_n=1$, then
the vector space $T(\widehat{\Bv})$ generated by $\widehat{\lambda}_{N-1}'\widehat{\Pi}_p(\widehat{\Bv})\cap\Zz^N$
runs through a finite collection. One can show that $T(\widehat{\Bv})$
uniquely determines the space $S(\Av )$. Hence $S(\Av )$ runs through
a finite collection.
This proves the Subspace Theorem.
\end{proof}

We should mention here that Faltings and W\"{u}stholz \cite{FaltingsWuestholz1994}
gave a very different proof of the Subspace Theorem,
avoiding geometry of numbers but using instead some involved
algebraic geometry.

Mahler's results on compound convex bodies have been applied at
various other places, in particular to obtain generalizations
of Khintchine's transference principle and Corollary \ref{cor2.4}.
Many of these results can be incorporated into the
\emph{Parametric Geometry of Numbers}, a recent theory which was initiated by
Schmidt and Summerer \cite{SchmidtSummerer2009,
SchmidtSummerer2013}. The general idea is as follows.
Let $\mu_1\kdots\mu_n$ be fixed reals which we normalize so that
$\mu_1+\cdots +\mu_n=0$
and consider the parametrized class of convex bodies in $\Rr^n$,
\[
\CC (q):=\{ \xv =(x_1\kdots x_n)\in\Rr^n:\, |x_i|\leq e^{\mu_iq}\ \mbox{for }
i=1\kdots n\}\ \ (q>0).
\]
Further, let $\Lambda$ be a fixed lattice in $\Rr^n$ and 
$\lambda_1(q)\kdots\lambda_n(q)$ the successive minima of $\CC (q)$
with respect to $\Lambda$. Then one would like to study these
successive minima as functions of $q$. In particular, one is interested
in the quantities
\begin{equation}\label{3.3}
\left\{
\begin{array}{l}
\displaystyle{\underline{\varphi}_i=\underline{\varphi}_i(\Lambda ,\bm{\mu}):=\liminf_{q\to\infty}(\log\lambda_i(q))/q},
\\[0.2cm]
\displaystyle{\overline{\varphi}_i=\overline{\varphi}_i(\Lambda ,\bm{\mu}):=\limsup_{q\to\infty}(\log\lambda_i(q))/q}
\end{array}\right.\ \ \
(i=1\kdots n).
\end{equation}
That is, $\underline{\varphi}_i$ is the infimum of all $\eta$ such that there
are arbitrarily large $q$ for which the system of inequalities
\begin{equation}\label{3.a}
|x_1|\leq e^{(\mu_1+\eta )q}\kdots |x_n|\leq e^{(\mu_n+\eta )q}
\end{equation}
is satisfied by $i$ linearly independent points from $\Lambda$,
while $\overline{\varphi}_i$ is the infimum of all $\eta$ such that for 
every sufficiently large $q$, system \eqref{3.a} is satisfied by
$i$ linearly independent points from $\Lambda$.
The quantities $\underline{\varphi}_i$, $\overline{\varphi}_i$ are finite, since if $\mu >\max_j |\mu_j|$,
then for every sufficiently large $q$, the body $e^{\mu q}\CC (q)$ contains $n$ 
linearly independent points from $\Lambda$, while $e^{-\mu q}\CC (q)$ 
does not contain
a non-zero point of $\Lambda$.

In case that $\Lambda$ is an algebraic lattice, i.e., if it is generated
by vectors with algebraic coordinates, then by following the proof of the 
Subspace Theorem one can show that $\underline{\varphi}_i=\overline{\varphi}_i$
for $i=1\kdots n$, i.e., the limits exist (this is a special case of
\cite[Theorem 16.1]{EvertseFerretti2013}, but very likely this was known
before). However, for non-algebraic lattices $\Lambda$ it may happen that $\underline{\varphi}_i<\overline{\varphi}_i$ for some $i$.

Many of the Diophantine approximation
exponents that have been introduced during the last decades can be expressed
in terms of the quantities $\underline{\varphi}_i$, 
$\overline{\varphi}_i$, and thus, results for these exponents can be translated
into results for the $\underline{\varphi}_i$, $\overline{\varphi}_i$.
For instance, let $A$ be a real $(n-m)\times m$-matrix with $1\leq m<n$,
and take
\begin{eqnarray*}
&&\Lambda =\{ (\xv ,A\xv-\yv):\, \xv\in\Zz^m,\, \yv\in\Zz^{n-m}\},
\\
&&\mu_1=\cdots =\mu_m=n-m,\ \ \mu_{m+1}=\cdots =\mu_n=m.
\end{eqnarray*}
Define $\underline{\varphi}_i(A):=\underline{\varphi}_i(\Lambda ,\bm{\mu})$
for this $\Lambda$ and $\bm{\mu}$. Then for the quantities $\omega$, $\omega^*$
from Corollary \ref{cor2.4} we have
\[
\underline{\varphi}_1(A)=-\frac{(n-m)^2\omega}{n+(n-m)\omega},\ \ \ 
\underline{\varphi}_1(A^T)=-\frac{m^2\omega^*}{n+m\omega^*},
\]
and the inequalities \eqref{2.1b} become
\[
\underline{\varphi}_1(A^T)\leq\medfrac{1}{n-1}\cdot\underline{\varphi}_1(A),
\ \ \ 
\underline{\varphi}_1(A)\leq\medfrac{1}{n-1}\cdot\underline{\varphi}_1(A^T).
\]

Studying the successive minima functions
$\lambda_i(q)$ for arbitrary lattices $\Lambda$ and reals $\mu_1\kdots\mu_n$
is probably much too hard.
In their papers \cite{SchmidtSummerer2009, SchmidtSummerer2013}
Schmidt and Summerer considered the special case
\begin{equation}\label{3.b}
\left\{\begin{array}{l}
\Lambda =\{(x,\xi_1x-y_1,\cdots\xi_{n-1}x-y_{n-1}):\, x,y_1\kdots y_{n-1}\in\Zz\},
\\
\mu_1=n-1,\ \mu_2=\cdots =\mu_n=-1,
\end{array}\right.
\end{equation}
where $\xi_1\kdots\xi_{n-1}$ are reals such that $1,\xi_1\kdots\xi_{n-1}$
are linearly independent over $\Qq$. 
That is, they considered the system of
inequalities
\[
|x|\leq e^{(n-1)q},\ \  |\xi_ix-y_i|\leq e^{-q}\ \ (i=1\kdots n-1).
\]
Let $\underline{\varphi}_i,\overline{\varphi}_i$ be the quantities 
defined in \eqref{3.3}, with $\Lambda ,\bm{\mu}$ as in \eqref{3.b}.
In \cite{SchmidtSummerer2009},
Schmidt and Summerer showed among other things that for every $i\in\{ 1\kdots n-1\}$ there
are arbitrarily large $q$ such that $\lambda_{i+1}(q)=\lambda_i(q)$.
As a consequence, $\underline{\varphi}_{i+1}\geq\overline{\varphi}_i$
for $i=1\kdots n-1$. They deduced several other
algebraic inequalities for the numbers $\underline{\varphi}_i,\overline{\varphi}_i$.

In \cite{SchmidtSummerer2013}, Schmidt and Summerer continued their
research and studied in more detail
the functions
\[
L_i(q):=\log\lambda_i(q)\ \ (i=1\kdots n).
\]
To this end, they introduced a class
of $n$-tuples of continuous, piecewise linear functions on $(0,\infty )$
with certain properties, the so-called \emph{$(n,\gamma )$-systems}.
The key argument in their proof is, that there is an $(n,\gamma )$-system
$(P_1(q)\kdots P_n(q))$ 
such that $|L_i(q)-P_i(q)|\leq c(n)$ for $i=1\kdots n$, $q>0$, 
where $c(n)$ depends on $n$ only. In the construction
of these functions, essential use is made of Mahler's results on
compound convex bodies. Indeed, for $p=1\kdots n-1$ let $\CC^{(p)}(q)$ be the $p$-th pseudocompound
of $\CC (q)$ and let $e^{M_p(q)}$ be the first minimum of $\CC^{(p)}(q)$
with respect to the $p$-th compound $\Lambda_p$ of $\Lambda$.
Further, put $M_0(q)=M_n(q):=0$.
Schmidt and Summerer showed that the functions
$P_i(q):=M_i(q)-M_{i-1}(q)$  ($i=1\kdots n$)
form an $(n,\gamma )$-system.
Theorem \ref{thm3.2} implies that there is $c(n)>0$
such that $|L_i(q)-P_i(q)|\leq c(n)$
for $i=1\kdots n$, $q>0$. 
It is important that $P_1(q)+\cdots +P_n(q)=0$, while for the original
functions $L_1(q)\kdots L_n(q)$ one knows only that
their sum is bounded.
It is clear that for $i=1\kdots n$ we have $\underline{\varphi}_i=\underline{\pi}_i$,  
$\overline{\varphi}_i=\overline{\pi}_i$ where
$\underline{\pi}_i:=\liminf_{q\to\infty} P_i(q)/q$ and
$\overline{\pi}_i:=\limsup_{q\to\infty} P_i(q)/q$. 

Schmidt and Summerer analyzed
$(n,\gamma )$-systems, which involved basically
combinatorics and had no connection with geometry of numbers anymore.
As a result of their
(fairly difficult) analysis they obtained several algebraic inequalities
for  $\underline{\pi}_i,\, \overline{\pi}_i$ $(i=1\kdots n)$.
These imply of course the same
inequalities for
$\underline{\varphi}_i$, $\overline{\varphi}_i$ $(i=1\kdots n)$.
This led to new proofs of older results and also various new results.

For instance, it is an easy consequence of Minkowski's theorem on successive
minima that
\[
(n-1)\underline{\varphi}_1+\overline{\varphi}_n\leq 0,\ \ 
(n-1)\overline{\varphi}_n+\underline{\varphi}_1\geq 0.
\]
Schmidt and Summerer \cite[bottom of p. 55]{SchmidtSummerer2013} improved this to
\[
(n-1)\underline{\varphi}_1+\overline{\varphi}_n\leq \overline{\varphi}_1(n-\underline{\varphi}_1+\overline{\varphi}_n),\ \ 
(n-1)\overline{\varphi}_n+\underline{\varphi}_1\geq \underline{\varphi}_n(n-\overline{\varphi}_n+\underline{\varphi}_1).
\]

Recently, Roy \cite{Roy2015} showed that the functions
$L_1(q)\kdots L_n(q)$ considered by Schmidt and Summerer can be approximated very
well by piecewise linear functions from a more restrictive class,
the \emph{$(n,0)$-systems}. This smaller class may be more easy to analyse
than the $(n,\gamma )$-systems and may perhaps lead to new insights
in the functions $L_i(q)$.

\end{document}